\newtheorem{theorem}{Theorem}[section]
\newtheorem{lemma}[theorem]{Lemma}
\newcommand{\bi}{\begin{itemize}}
\newcommand{\ei}{\end{itemize}}
\newcommand{\ba}{\begin{array}}
\newcommand{\ea}{\end{array}}
\newlength{\eqAlgoAfter}
\newlength{\eqAlgoBefore}
\begin{document}

\title{\textbf{A Non-Monotone Line-Search Method for Minimizing Functions with Spurious Local Minima}}


\author[1]{Zohreh Aminifard\thanks{Email: zohreh.aminifard@uclouvain.be. Supported Supported by ``Fonds spéciaux de Recherche", UCLouvain.}}

\author[1]{Geovani Nunes Grapiglia\thanks{Email: geovani.grapiglia@uclouvain.be.}}

\affil[1]{Université catholique de Louvain, Department of Mathematical Engineering, ICTEAM, 1348 Louvain-la-Neuve, Belgium}

\date{February 24, 2025}

\maketitle

\begin{abstract}
In this paper, we propose a new non-monotone line-search method for smooth unconstrained optimization problems with objective functions that have many non-global local minimizers. The method is based on a relaxed Armijo condition that allows a controllable increase in the objective function between consecutive iterations. This property helps the iterates escape from nearby local minimizers in the early iterations. For objective functions with Lipschitz continuous gradients, we derive worst-case complexity estimates on the number of iterations needed for the method to find approximate stationary points. Numerical results are presented, showing that the new method can significantly outperform other non-monotone methods on functions with spurious local minima.
\end{abstract}

\section{Introduction}

In this work, we consider optimization problems of the form
\begin{equation}
\min_{x\in\mathbb{R}^{n}}\,f(x),
\label{eq:zg1}
\end{equation}
where $f:\mathbb{R}^{n}\to\mathbb{R}$ is a continuously differentiable function with many non-global local minimizers, such as the one shown in Figure \ref{fig:0}. The minimization of functions with spurious local minima appears in several applications, including the distance geometry problem \cite{Liberti}, compressive clustering \cite{Ayoub}, and full-waveform inversion \cite{Tristan}. Standard first-order optimization methods, like Gradient Descent and quasi-Newton methods with line-search, impose a decrease in the objective function at consecutive iterations. Therefore, their iterates are prone to getting trapped in the basin of attraction of nearby local minimizers. A common approach to addressing this issue is the multi-start technique \cite{Yuan}, where the method is applied to a set of different starting points, and the best point found is returned as an approximate solution. However, multi-start can be computationally expensive in terms of function and gradient evaluations.
\newpage
\begin{figure}[htp!]
\centering
\includegraphics[width=0.6\columnwidth]{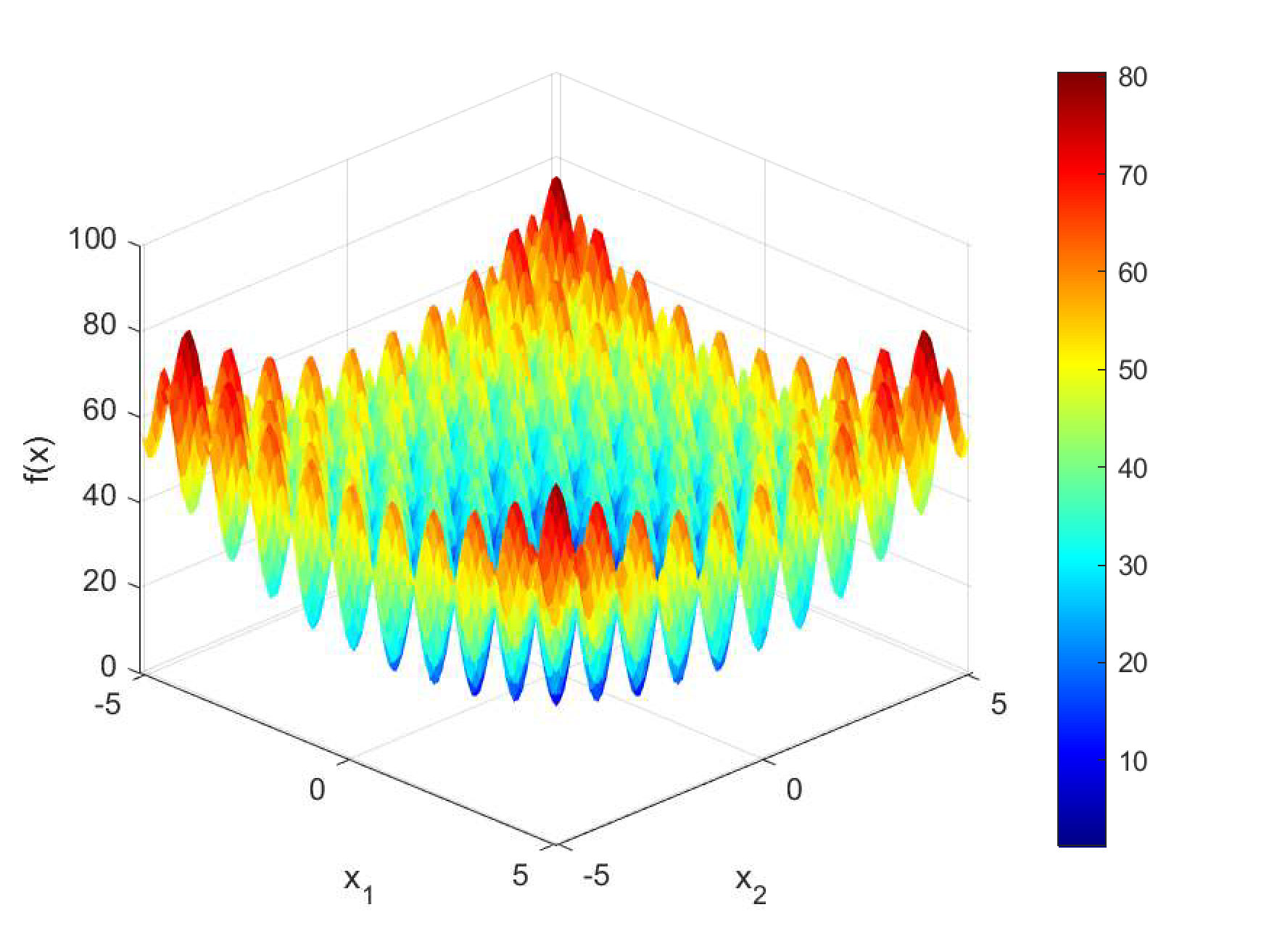}
\caption{The Rastrigin function, defined by $f(x)=20+\sum_{i=1}^{2} \left[x_{i}^{2}-10\cos\left(2\pi x_{i}\right)\right]$.}
\label{fig:0}
\end{figure}

An alternative is to use non-monotone optimization methods \cite{Grippo,ZhangHager,Toint,Cartis,Masoud,Aminifard}, which allow the objective function to increase between consecutive iterations. Due to this property, the iterates may ``jump hills'' and escape nearby local minimizers. Typical non-monotone line-search methods are based on a relaxed variant of the Armijo condition \cite{Sachs}, i.e., given $x_{k}\in\mathbb{R}^{n}$, a search direction $d_{k}$ with $\langle\nabla f(x_{k}),d_{k}\rangle<0$, and a constant $\rho\in (0,1)$, a trial point $x_{k}^{+}=x_{k}+\alpha_{k}d_{k}$ is accepted as the new iterate $x_{k+1}$ whenever the inequality
\begin{equation}
f(x_{k}^{+})\leq f(x_{k})+\rho\alpha_{k}\langle\nabla f(x_{k}),d_{k}\rangle+\nu_{k},
\label{eq:zg2}
\end{equation}
holds, where $\nu_{k}\geq 0$ is the relaxation term that allows non-monotonicity. The larger $\nu_{k}$ is, the greater the chance that the method will accept $x_{k}^{+}$ even when $f(x_{k}^{+})>f(x_{k})$. Different choices for $\nu_{k}$ lead to different non-monotone methods. For example, the Grippo-Lampariello-Lucidi non-monotone line-search \cite{Grippo} corresponds to the choice
\begin{equation}
\nu_{k}=f_{l(k)}-f(x_{k}),
\label{eq:zg3}
\end{equation}
where $f_{l(k)}$ is the maximum value of the objective function obtained in the previous $M$ iterations:
\begin{equation}
f_{l(k)}=\max_{0\leq j\leq m(k)}\,f(x_{k-j}),
\label{eq:zg4}
\end{equation}
with $m(0)=0$ and $m(k)=\min\left\{m(k-1)+1,M\right\}$. More recently, targeting problems with many non-global local minimizers, Grapiglia and Sachs \cite{GrapigliaSachs} proposed a non-monotone line-search inspired by the Metropolis rule used in Simulated Annealing methods \cite{Kirkpatrick,Delahaye}. Specifically, they considered (\ref{eq:zg2}) with
\begin{equation}
\nu_{k}=\sigma\cdot\text{exp}\left(-\max\left\{\theta,\,f(x_{k}^{+})-f(x_{k})\right\}\ln(k+1)\right),
\label{eq:zg5}
\end{equation}
where $\sigma,\theta>0$ are user-defined constants with $\sigma$ large and $\theta$ small. When $f(x_{k}^{+})<f(x_{k})$, it follows from (\ref{eq:zg5}) that $\nu_{k}=\frac{\sigma}{(k+1)^{\theta}}$. Thus, by (\ref{eq:zg2}), it is very likely that $x_{k}^{+}$ will be accepted as the next iterate in early iterations. On the other hand, even when $f(x_{k}^{+})>f(x_{k})$, we also have $\nu_{k}>0$, meaning that $x_{k}^{+}$ can still be accepted as the next iterate. Since the logarithmic factor in (\ref{eq:zg5}) grows with $k$, the likelihood of accepting a point with worse function value decreases as the iterations progress. Moreover, the smaller $\theta$ is, the greater the chance of accepting a point with function value higher than $f(x_{k})$ in the early iterations. The authors in \cite{GrapigliaSachs} reported numerical results showing that the non-monotone method based on (\ref{eq:zg5}) exhibited remarkable performance when applied to the minimization of the Griewank function, a highly nonconvex function with many spurious local minimizers.

Inspired by the results in \cite{GrapigliaSachs}, in this paper we propose a modified Metropolis-based non-monotone line-search method and present comprehensive numerical results on a collection of global optimization test problems. Specifically, our method accepts $x_{k}^{+}$ as the new iterate whenever (\ref{eq:zg2}) is satisfied with
\begin{equation}
\nu_{k}=\sigma\cdot\text{exp}\left(-\max\left\{\theta,\,\dfrac{f_{l(k)}-f(x_{k}^{+})}{\rho\alpha_{k}\langle\nabla f(x_{k}),d_{k}\rangle}\right\}\ln(k+1)\right),
\label{eq:zg6}
\end{equation}
where $\sigma,\theta>0$ are user-defined constants, and $f_{l(k)}$ is defined in (\ref{eq:zg4}). Our numerical results show that the new method can significantly outperform the non-monotone methods in \cite{Grippo,ZhangHager,GrapigliaSachs} on smooth unconstrained optimization problems with objective functions that have many non-global local minimizers.

The paper is organized as follows. In Section 2, we present the new algorithm, its motivation, and the theoretical guarantees regarding the number of iterations to find $\epsilon$-approximate stationary points. In Section 3, we then describe numerical experiments on global optimization problems, comparing the new method with other non-monotone line-search methods.
\vspace{-0.2cm}
\section{New Method and its Complexity Guarantees}\label{Main}

Our new relaxation term (\ref{eq:zg6}) may seem somewhat mysterious at first glance. Therefore, before describing the new algorithm in detail, let us explain how we arrived at the non-monotone term (\ref{eq:zg6}). First, as mentioned above, using the Metropolis-based term (\ref{eq:zg5}) with \( \sigma \gg 1 \) and a small \( \theta > 0 \), it is very likely that \( x_{k}^{+} \) will be accepted in the early iterations when it provides even a slight decrease in the objective function, i.e., when \( f(x_{k}^{+}) < f(x_{k}) \). To further encourage iterates to escape the basin of attraction of nearby local minimizers, it seems reasonable to relax the acceptance rule, allowing \( x_{k}^{+} \) to be accepted in the early iterations as long as \( f(x_{k}^{+}) < f_{l(k)} \), where \( f_{l(k)} \) is defined in (\ref{eq:zg4}). For that, a natural modification of (\ref{eq:zg5}) would be
\begin{equation}
\nu_{k}=\sigma\cdot\text{exp}\left(-\max\left\{\theta,f(x_{k}^{+})-f_{l(k)}\right\}\ln(k+1)\right).
\label{eq:zg7}
\end{equation}
If we suppose that $f(x)$ represents a physical quantity, then it must have an associated \textit{physical dimension} with a corresponding unit of measurement (such as meters, seconds, etc.). From this perspective, the quantity
\begin{equation}
\max\left\{\theta,f(x_{k}^{+})-f_{l(k)}\right\}
\label{eq:zg8}
\end{equation}
in (\ref{eq:zg7}) is not dimensionally consistent, since the user-defined constant $\theta$ is \textit{dimensionless}, whereas $f(x_{k}^{+})-f_{l(k)}$ has a physical dimension. In view of (\ref{eq:zg2}), a natural way to resolve this inconsistency is to replace (\ref{eq:zg8}) in (\ref{eq:zg7}) with
\begin{equation}
\max\left\{\theta,\frac{f(x_{k}^{+})-f_{l(k)}}{-\rho\langle\nabla f(x_{k}),d_{k}\rangle}\right\}.
\label{eq:zg9}
\end{equation}
Indeed, following \cite[p.~322]{Nikita}, let us denote by $[x]$ the unit of measurement associated with any $\bar{x}\in\mathbb{R}^{n}$ and by $[f]$ the unit of measurement associated with any value $f(\bar{x})$. With this notation, a number $z\in\mathbb{R}$ is said to be dimensionless when $[z]=1$. Since $\left[\nabla f(x)\right]=[f]/[x]$ and $[-\rho]=1$, it follows that
\vspace{-0.1cm}
\begin{equation*}
\left[\dfrac{f(x_{k}^{+})-f_{l(k)}}{-\rho\langle\nabla f(x_{k}),d_{k}\rangle}\right]=\dfrac{[f]}{[-\rho][\nabla f(x)][x]}=\dfrac{[f]}{\frac{[f]}{[x]}[x]}=1.
\end{equation*}
\vspace{-0.1cm}
This implies that the fraction in (\ref{eq:zg9}) is dimensionless and thus directly comparable to $\theta$. Based on this \textit{Dimensional Analysis} \cite[Chapter 2]{Zlokarnik}, we arrive at our new relaxation term by replacing (\ref{eq:zg8}) in (\ref{eq:zg7}) with (\ref{eq:zg9}):
\vspace{-0.1cm}
\begin{equation*}
\nu_{k}=\sigma\cdot\text{exp}\left(-\max\left\{\theta,\,\dfrac{f_{l(k)}-f(x_{k}^{+})}{\rho\alpha_{k}\langle\nabla f(x_{k}),d_{k}\rangle}\right\}\ln(k+1)\right).
\end{equation*}
Since \( \langle\nabla f(x_{k}),d_{k}\rangle < 0 \), if \( f(x_{k}^{+}) < f_{l(k)} \), then $\nu_{k} = \sigma/(k+1)^{\theta}$. Therefore, by choosing \( \sigma \) large, it is very likely that \( x_{k}^{+} \) will satisfy (\ref{eq:zg2}) and be accepted as the next iterate in the early iterations of the method, which aligns with our initial intention for the new acceptance rule. Regarding the choice of $\sigma$, since $\nu_{k}$ is compared with function values in (\ref{eq:zg2}), dimensional consistency requires that $[\nu_{k}]=[f]$. For instance, a feasible choice considered in \cite{GrapigliaSachs} is $\sigma=|f(x_{0})|$.

Below we provide the detailed description of our new non-monotone method.
\begin{mdframed}
\noindent\textbf{Algorithm 1.} Non-monotone line-search with modified Metropolis-based relaxation term
\\[0.12cm]
\noindent\textbf{Step 0.} Given $x_{0}\in\mathbb{R}^{n}$, $\alpha_{0},\sigma,\theta>0$, $\beta,\rho\in (0,1)$, and $M\in\mathbb{N}\setminus\left\{0\right\}$, set $m(0)=0$ and $k:=0$.
\\[0.1cm]
\noindent\textbf{Step 1.} Compute a search direction $d_{k}\in\mathbb{R}^{n}$ with $\langle\nabla f(x_{k}),d_{k}\rangle<0$, and set $i:=0$.
\\[0.1cm]
\noindent\textbf{Step 2.1.} Compute $x_{k,i}^{+}=x_{k}+\beta^{i}\alpha_{k}d_{k}$ and
\begin{equation}
\nu_{k,i}=\sigma\cdot\text{exp}\left(-\max\left\{\theta,\dfrac{f_{l(k)}-f(x_{k,i}^{+})}{\rho\beta^{i}\alpha_{k}\langle\nabla f(x_{k}),d_{k}\rangle}\right\}\ln(k+1)\right).
\label{eq:zg10}
\end{equation}
where $f_{l(k)}=\max_{0\leq j\leq m(k)}\,f(x_{k-j})$.
\\[0.2cm]
\noindent\textbf{Step 2.2.} If
\begin{equation}
  f(x_{k,i}^{+})\leq f(x_{k})+\rho\beta^i\alpha_k\langle\nabla f(x_k),d_k\rangle+\nu_{k,i},
\label{eq:zg11}
\end{equation}
\vspace{-0.15cm}
set $i_k=i$, $\nu_k=\nu_{k,i_k}$ and go to Step 3. Otherwise, set $i:=i+1$ and go back to Step 2.1.
\\[0.1cm]
\noindent\textbf{Step 3.} Define $x_{k+1}=x_{k,i_{k}}^{+}$, $\alpha_{k+1}=\beta^{i_{k}-1}\alpha_{k}$,  $m(k+1)=\min\left\{m(k)+1,M\right\}$, set $k:=k+1$, and go to Step 1.
\end{mdframed}
Let us consider the following assumptions:
\\[0.2cm]
\noindent\textbf{A1.} The objective function $f : \mathbb{R}^{n} \to\mathbb{R}$ is differentiable and its gradient $\nabla f : \mathbb{R}^{n}\to \mathbb{R}^{n}$ is
   Lipschitz continuous with the Lipschitz constant $L > 0$.
\\[0.2cm]
\noindent\textbf{A2.} There exists $f_{low}\in\mathbb{R}$ such that $f (x)\geq f_{low}$ for all $x\in\mathbb{R}^{n}$.
\\[0.2cm]
\noindent\textbf{A3.} There exist constants $c_{1},c_{2}>0$, such that
    \begin{equation*}
    \langle\nabla f(x_k),d_k\rangle\leq -c_1\|\nabla f(x_k)\|^2,\ \mbox{and}\ \|d_k\|\leq c_2\|\nabla f(x_k)\|, \ \forall k\geq0.
  \end{equation*}
The next lemma establishes the rates at which Algorithm 1 drives the norm of the gradient of the objective function to zero, depending on the values of the user-defined parameter $\theta$.
  \begin{lemma}
\label{lem:1}
Suppose that A1-A3 hold. Then Algorithm 1 is well-defined and any sequence $\left\{x_{k}\right\}_{k\geq 0}$ generated by it satisfies
\begin{equation}
\min_{k=0,\ldots,T-1}\,\|\nabla f(x_{k})\|^{2}\leq\left\{\begin{array}{ll} \dfrac{\tilde{L}(f(x_{0})-f_{low})}{T}+\dfrac{\tilde{L}\sigma\theta}{(\theta-1)}\dfrac{1}{T},&\text{if $\theta>1$},\\
\dfrac{\tilde{L}(f(x_{0})-f_{low})}{T}+\tilde{L}\sigma\left(\dfrac{1+\ln(T)}{T}\right),&\text{if $\theta=1$},\\
\dfrac{\tilde{L}(f(x_{0})-f_{low})}{T}+\dfrac{\tilde{L}\sigma}{(1-\theta)}\dfrac{1}{T^{\theta}},&\text{if $\theta\in (0,1)$},
\end{array}
\right.
\label{eq:zg12}
\end{equation}
for all $T\geq 1$, where
\begin{equation}
\tilde{L}=\max\left\{\dfrac{1}{\rho\beta\alpha_{0}c_{1}},\dfrac{L}{2\rho (1-\rho)\beta}\left(\frac{c_{2}}{c_{1}}\right)^{2}\right\}
\label{eq:zg13}
\end{equation}
  \end{lemma}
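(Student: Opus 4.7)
The plan is to first verify that the inner loop terminates (so Algorithm 1 is well-defined), then establish a uniform lower bound on the effective step length $\beta^{i_k}\alpha_k$, then combine this with the acceptance rule to obtain a per-iteration decrease of the form $f(x_{k+1})-f(x_k)\leq -\tilde L^{-1}\|\nabla f(x_k)\|^2+\nu_k$, and finally telescope and bound $\sum \nu_k$ in each of the three regimes of $\theta$.

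For well-definedness, I would note that since $\nu_{k,i}\geq 0$, condition (\ref{eq:zg11}) is implied by the ordinary Armijo inequality. Using A1 and A3, the descent lemma gives $f(x_{k,i}^{+})\leq f(x_k)+\beta^{i}\alpha_k\langle\nabla f(x_k),d_k\rangle+\tfrac{L}{2}(\beta^{i}\alpha_k)^{2}c_2^{2}\|\nabla f(x_k)\|^{2}$, and this together with $\langle\nabla f(x_k),d_k\rangle\leq -c_1\|\nabla f(x_k)\|^2$ shows that Armijo holds as soon as $\beta^{i}\alpha_k\leq 2c_1(1-\rho)/(Lc_2^{2})$, which must happen after finitely many backtracks.

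The heart of the argument is the lower bound on $\beta^{i_k}\alpha_k$. If $i_k\geq 1$, the failure of (\ref{eq:zg11}) at $i=i_k-1$ (combined with $\nu_{k,i_k-1}\geq 0$) and the same descent-lemma manipulation as above force $\beta^{i_k-1}\alpha_k>2c_1(1-\rho)/(Lc_2^{2})$; hence $\alpha_{k+1}=\beta^{i_k-1}\alpha_k>2c_1(1-\rho)/(Lc_2^{2})$ as well. If $i_k=0$, then $\alpha_{k+1}=\alpha_k/\beta>\alpha_k$. An easy induction then yields $\alpha_k\geq\min\{\alpha_0,2c_1(1-\rho)/(Lc_2^{2})\}$ for every $k$, and splitting on whether $i_k=0$ or $i_k\geq 1$ gives the uniform bound
\begin{equation*}
\beta^{i_k}\alpha_k\;\geq\;\beta\min\Bigl\{\alpha_0,\,\tfrac{2c_1(1-\rho)}{Lc_2^{2}}\Bigr\},
\end{equation*}
which, after multiplying by $\rho c_1$, is exactly $1/\tilde L$ with $\tilde L$ as in (\ref{eq:zg13}).

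With this in hand, (\ref{eq:zg11}) together with A3 gives $f(x_{k+1})-f(x_k)\leq -\rho c_1\beta^{i_k}\alpha_k\|\nabla f(x_k)\|^{2}+\nu_k\leq -\tilde L^{-1}\|\nabla f(x_k)\|^{2}+\nu_k$, and the definition of $\nu_k$ together with the $\max\{\theta,\cdot\}$ yields the clean bound $\nu_k\leq \sigma(k+1)^{-\theta}$. Telescoping from $0$ to $T-1$ and using A2 produces
\begin{equation*}
\min_{0\leq k\leq T-1}\|\nabla f(x_k)\|^{2}\;\leq\;\frac{\tilde L(f(x_0)-f_{low})}{T}+\frac{\tilde L\sigma}{T}\sum_{k=0}^{T-1}(k+1)^{-\theta}.
\end{equation*}
The three cases of the lemma then follow by standard estimates of the tail sum $\sum_{k=1}^{T}k^{-\theta}$: comparison with $\int_{1}^{\infty}k^{-\theta}dk$ for $\theta>1$ giving the constant $\theta/(\theta-1)$; the harmonic bound $1+\ln T$ for $\theta=1$; and $\int_{0}^{T}k^{-\theta}dk=T^{1-\theta}/(1-\theta)$ for $\theta\in(0,1)$, which after dividing by $T$ produces the $T^{-\theta}$ term. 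The only delicate point in the whole argument is the bookkeeping for the adaptive step $\alpha_{k+1}=\beta^{i_k-1}\alpha_k$—one must carefully separate the $i_k=0$ and $i_k\geq 1$ cases to recover the $\beta$ factor that appears in $\tilde L$—but once that is done the rest is essentially a standard non-monotone complexity computation.
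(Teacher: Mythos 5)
Your proposal is correct and follows essentially the same route as the paper's proof: termination of the backtracking, the bound $\nu_k\leq\sigma(k+1)^{-\theta}$ coming from the $\max\{\theta,\cdot\}$, the per-iteration inequality with the constant $\tilde L$, telescoping, and the standard integral estimates of $\sum_{k=1}^T k^{-\theta}$. The only difference is cosmetic: you re-derive the step-size lower bound (\ref{eq:zg14}) inline via the failure of the ordinary Armijo test at $i_k-1$, whereas the paper simply invokes Lemma 2 of \cite{GrapigliaSachs2} for the same fact.
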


\begin{proof}
Since $f(\,\cdot\,)$ is continuously differentiable (by A1) and $\langle\nabla f(x_{k}),d_{k}\rangle<0$ (by A3), it follows that the relaxed Armijo condition (\ref{eq:zg11}) is satisfied when $\beta^{i}\alpha_{k}$ is sufficiently small. As $\beta \in (0,1)$, this occurs when $i$ is sufficiently large. Hence, by Step 2 of Algorithm 1, the vector $x_{k+1}$, and the values $\nu_{k}$ and $\alpha_{k+1}$, are well-defined. In addition, by Lemma 2 in \cite{GrapigliaSachs2}, we have
\begin{equation}
\alpha_{k}\geq\min\left\{\alpha_{0},\dfrac{2(1-\rho)c_{1}}{Lc_{2}^{2}}\right\},\quad\forall k\geq 0.
\label{eq:zg14}
\end{equation}
Since $\nu_{k}=\nu_{k,i_{k}}$, it follows from (\ref{eq:zg10}) that
\begin{equation*}
    0<\nu_{k}\leq\sigma\cdot e^{-\theta\ln(k+1)}=\dfrac{\sigma}{(k+1)^{\theta}},\quad\forall k\geq 0.
\end{equation*}
Then, by (\ref{eq:zg11}), A3, $\alpha_{k+1}=\beta^{i_{k}-1}\alpha_{k}$ and (\ref{eq:zg14}), we have
\begin{eqnarray}
\dfrac{\sigma}{(k+1)^{\theta}}+f(x_{k})-f(x_{k+1})&\geq & \rho\beta^{i_{k}}\alpha_{k}\left(-\langle\nabla f(x_{k}),d_{k}\rangle\right)\geq \rho\beta\alpha_{k+1}c_{1}\|\nabla f(x_{k})\|^{2}\nonumber\\
&\geq &\min\left\{\rho\beta\alpha_{0}c_{1},\dfrac{2\rho (1-\rho)\beta}{L}\left(\frac{c_{1}}{c_{2}}\right)^{2}\right\}\|\nabla f(x_{k})\|^{2}\nonumber\\
& = & \dfrac{1}{\tilde{L}}\|\nabla f(x_{k})\|^{2}.
\label{eq:zg15}
\end{eqnarray}
Given $T\geq 1$, summing up inequalities (\ref{eq:zg15}) for $k=0,\ldots,T-1$ and using A2, we obtain
\begin{equation*}
\dfrac{T}{\tilde{L}}\min_{k=0,\ldots,T-1}\|\nabla f(x_{k})\|^{2}\leq \sum_{k=0}^{T-1}\dfrac{1}{\tilde{L}}\|\nabla f(x_{k})\|^{2}\leq f(x_{0})-f(x_{T})+\sum_{k=0}^{T-1}\dfrac{\sigma}{(k+1)^{\theta}}\leq f(x_{0})-f_{low}+\sigma\sum_{k=0}^{T-1}\dfrac{1}{(k+1)^{\theta}},
\end{equation*}
and so
\begin{equation}
\min_{k=0,\ldots,T-1}\,\|\nabla f(x_{k})\|^{2}\leq \dfrac{\tilde{L}(f(x_{0})-f_{low})}{T}+\tilde{L}\sigma\left(\dfrac{\sum_{k=0}^{T-1}\frac{1}{(k+1)^{\theta}}}{T}\right).
\label{eq:zg16}
\end{equation}
Finally, (\ref{eq:zg12}) follows from (\ref{eq:zg16}) by using the upper bounds:
\begin{equation*}
    \sum_{k=0}^{T-1}\dfrac{1}{(k+1)^{\theta}}=\sum_{k=1}^{T}\dfrac{1}{k^{\theta}}\leq\left\{\begin{array}{ll} \frac{\theta}{\theta-1},&\text{if $\theta>1$},\\
    1+\ln(T),&\text{if $\theta=1$},\\
    \frac{T^{1-\theta}}{(1-\theta)},&\text{if $\theta\in (0,1)$}.
    \end{array}
    \right.
\end{equation*}
\end{proof}
Given $\epsilon>0$, let
\begin{equation}
T(\epsilon)=\inf\left\{k\in\mathbb{N}\,:\,\|\nabla f(x_{k})\|\leq\epsilon\right\}.
\label{eq:zg17}
\end{equation}
It follows Lemma \ref{lem:1} that $T(\epsilon)<+\infty$. The next result gives explicit upper bounds for the number of iterations that Algorithm 1 takes to find an $\epsilon$-approximate stationary point of $f(\,\cdot\,)$ when $\theta\neq 1$.
\begin{theorem}
Suppose that A1-A3 hold, and let $\left\{x_{k}\right\}_{k\geq 0}$ be generated by Algorithm 1. Given $\epsilon>0$, we have
\begin{equation}
T(\epsilon)\leq\left\{\begin{array}{ll} \left[\tilde{L}(f(x_{0})-f_{low})+\dfrac{\tilde{L}\sigma\theta}{(\theta-1)}\right]\epsilon^{-2},&\text{if $\theta>1$},\\
\max\left\{2\tilde{L}(f(x_{0})-f_{low})\epsilon^{-2},\left[\dfrac{2\tilde{L}\sigma}{(1-\theta)}\right]^{\frac{1}{\theta}}\epsilon^{-\frac{2}{\theta}}\right\},&\text{if $\theta\in (0,1)$},
\end{array}
\right.
\label{eq:zg18}
\end{equation}
where $\tilde{L}$ is defined in (\ref{eq:zg13}).
\label{thm:1}
\end{theorem}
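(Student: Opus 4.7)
The proof is a direct consequence of Lemma \ref{lem:1}, so the plan is essentially to invert the rate bounds given there. The key observation is that if $T(\epsilon)>T$, then by definition (\ref{eq:zg17}) we have $\|\nabla f(x_{k})\|>\epsilon$ for all $k=0,\ldots,T-1$, and hence $\min_{k=0,\ldots,T-1}\|\nabla f(x_{k})\|^{2}>\epsilon^{2}$. Therefore, to bound $T(\epsilon)$ from above, it suffices to find the smallest $T$ for which the right-hand side of (\ref{eq:zg12}) is at most $\epsilon^{2}$: any such $T$ produces a contradiction with $T(\epsilon)>T$, yielding $T(\epsilon)\leq T$.

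For the case $\theta>1$, I would simply set the right-hand side of the first line of (\ref{eq:zg12}) equal to $\epsilon^{2}$, i.e.\ solve
\begin{equation*}
\dfrac{\tilde{L}(f(x_{0})-f_{low})}{T}+\dfrac{\tilde{L}\sigma\theta}{\theta-1}\cdot\dfrac{1}{T}\leq\epsilon^{2},
\end{equation*}
which rearranges to $T\geq\left[\tilde{L}(f(x_{0})-f_{low})+\tilde{L}\sigma\theta/(\theta-1)\right]\epsilon^{-2}$, giving exactly the first line of (\ref{eq:zg18}).

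For the case $\theta\in(0,1)$, the right-hand side of (\ref{eq:zg12}) is the sum of two terms that decay at different rates, $1/T$ and $1/T^{\theta}$, so they cannot be inverted jointly in closed form. The standard trick here is to force each term to be at most $\epsilon^{2}/2$ separately. The condition $\tilde{L}(f(x_{0})-f_{low})/T\leq\epsilon^{2}/2$ gives $T\geq 2\tilde{L}(f(x_{0})-f_{low})\epsilon^{-2}$, while $\tilde{L}\sigma/\bigl((1-\theta)T^{\theta}\bigr)\leq\epsilon^{2}/2$ gives $T\geq\bigl[2\tilde{L}\sigma/(1-\theta)\bigr]^{1/\theta}\epsilon^{-2/\theta}$. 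Taking the maximum of the two thresholds delivers the second line of (\ref{eq:zg18}).

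There is no real obstacle; the argument is entirely mechanical once Lemma \ref{lem:1} is in hand. The only mildly delicate point is the splitting of $\epsilon^{2}$ into $\epsilon^{2}/2+\epsilon^{2}/2$ in the subcase $\theta\in(0,1)$, which is why the final bound takes the form of a maximum rather than a single expression. The case $\theta=1$ is excluded from the statement because the corresponding bound in (\ref{eq:zg12}) contains a $\ln(T)/T$ term whose inversion does not admit a clean closed-form expression.
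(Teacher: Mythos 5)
Your proposal is correct and follows essentially the same route as the paper: both invert Lemma \ref{lem:1}, treating $\theta>1$ by direct rearrangement and $\theta\in(0,1)$ by forcing each of the two terms in (\ref{eq:zg12}) to be at most $\epsilon^{2}/2$ and taking the maximum of the resulting thresholds. The only (immaterial) difference is that the paper instantiates Lemma \ref{lem:1} at $T=T(\epsilon)$ itself and argues by contradiction with (\ref{eq:zg17}), which yields the stated real-valued bounds exactly, whereas your ``smallest integer $T$ with right-hand side at most $\epsilon^{2}$'' phrasing strictly only bounds $T(\epsilon)$ by the ceiling of those quantities.
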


\begin{proof}
If $T(\epsilon)=0$, then (\ref{eq:zg18}) is clearly true. Suppose that $T(\epsilon)\geq 1$, Then, by (\ref{eq:zg17}) we have
\begin{equation}
\epsilon^{2}<\min_{k=0,\ldots,T(\epsilon)-1}\,\|\nabla f(x_{k})\|^{2},
\label{eq:zg19}
\end{equation}
and so, for $\theta>1$, the upper bound on $T(\epsilon)$ in (\ref{eq:zg18}) follows directly from the corresponding case of (\ref{eq:zg12}) with $T=T(\epsilon)$. Regarding the case $\theta\in (0,1)$, let us assume, for the sake of contradiction, that
\begin{equation*}
    T(\epsilon)>\max\left\{2\tilde{L}(f(x_{0})-f_{low})\epsilon^{-2},\left[\dfrac{2\tilde{L}\sigma}{(1-\theta)}\right]^{\frac{1}{\theta}}\epsilon^{-\frac{2}{\theta}}\right\}.
\end{equation*}
Then, we obtain the inequalities
\begin{equation*}
    \dfrac{\tilde{L}(f(x_{0})-f_{low})}{T(\epsilon)}\leq\dfrac{\epsilon^{2}}{2}\quad\text{and}\quad\dfrac{\tilde{L}\sigma}{(1-\theta)}\dfrac{1}{T(\epsilon)^{\theta}}\leq\dfrac{\epsilon^{2}}{2}.
\end{equation*}
As a result, applying the corresponding case in (\ref{eq:zg12}) with $T=T(\epsilon)$ implies that
\begin{equation*}
\min_{k=0,\ldots,T(\epsilon)-1}\,\|\nabla f(x_{k})\|^{2}\leq \epsilon^{2},
\end{equation*}
which contradicts (\ref{eq:zg19}). This proves the case $\theta\in (0,1)$ in (\ref{eq:zg18}).
\end{proof}

If \( \epsilon \in (0,1) \), Theorem \ref{thm:1} implies that Algorithm 1 requires at most \( \mathcal{O}\left(\epsilon^{-\frac{2}{\theta}}\right) \) iterations to find an \( \epsilon \)-approximate stationary point of \( f(\,\cdot\,) \) when \( \theta \in (0,1) \). This complexity bound improves upon the bound of \( \mathcal{O}\left(\epsilon^{-\frac{2(1+\theta)}{\theta}}\right) \) established in Theorem 6 of \cite{GrapigliaSachs} for the Metropolis-based method with the relaxation term (\ref{eq:zg5}). As mentioned earlier, the smaller the value of \( \theta \), the higher the probability of accepting \( x_{k,i}^{+} \) even when \( f(x_{k,i}^{+}) > f(x_{k}) \). Consequently, the likelihood of escaping non-global local minimizers increases. However, our complexity bound also indicates that a smaller \( \theta \) may lead to a higher number of iterations required to reach an \( \epsilon \)-approximate stationary point. Therefore, our result for \( \theta \in (0,1) \) captures the well-known tradeoff between exploration and exploitation in global optimization.

\section{Numerical Results}\label{Num1}

To access the efficiency of Algorithm 1, we performed numerical experiments on a set of global optimization problems. Specifically, we compared the following MATLAB codes:
\\[0.3cm]
\noindent\textbf{M}: the monotone Armijo line-search method, which uses (\ref{eq:zg2}) with $\nu_{k}=0$.
\\[0.3cm]
\noindent\textbf{NM1}: the Grippo-Lampariello-Lucidi non-monotone method \cite{Grippo}, which uses (\ref{eq:zg2}) with $\nu_{k}$ defined by (\ref{eq:zg3})-(\ref{eq:zg4}) with $M=10$.
\\[0.3cm]
\noindent\textbf{NM2}: the Zhang-Hager non-monotone method \cite{ZhangHager}, which uses (\ref{eq:zg2}) with $\nu_{k}=C_{k}-f(x_{k})$, where $C_{0}=f(x_{0})$ and, for all $k\geq 0$,
\begin{equation*}
    C_{k+1}=\dfrac{\eta_{k}Q_{k}C_{k}+f(x_{k+1})}{Q_{k+1}},\quad Q_{k+1}=\eta_{k}Q_{k}+1,
\end{equation*}
with $Q_{0}=1$ and $\eta_{k}=(0.85)/(k+1)$.
\\[0.3cm]
\noindent\textbf{NM3}: the Metroplis-based non-monotone method \cite{GrapigliaSachs}, which uses (\ref{eq:zg2}) with $\nu_{k}$ defined by (\ref{eq:zg5}) with $\theta=2$ and $\sigma=|f(x_{0})|$.
\\[0.3cm]
\noindent\textbf{NM4}: Algorithm 1, with $\theta=2$ and $\sigma=|f(x_{0})|$.
\\[0.3cm]
\noindent In all implementations, we considered the parameters $\alpha_0=1$ and $\beta=\rho=0.5$. The search directions were defined by $d_{k}=-H_{k}\nabla f(x_k)$,
where matrices $H_{k}$ were updated using a safe-guarded BFGS formula:
\begin{equation}\label{Bk}
  H_{k+1}=\left\{
        \begin{array}{ll}
        \left(I-\dfrac{s_ky_k^T}{s_k^Ty_k}\right)H_k\left(I-\dfrac{y_ks_k^T}{s_k^Ty_k}\right)+\dfrac{s_ks_k^T}{s_k^Ty_k}, &   s_k^Ty_k>0,\\
        \\
        H_{k}, & \hbox{otherwise,}
        \end{array}
        \right.
\end{equation}
with $H_0 = I$, $s_k = x_{k+1}-x_k$, and $y_k = \nabla f(x_{k+1})-\nabla f(x_k)$. The tests were performed with MATLAB R2023a, on a PC with processor 13th Gen Intel(R) Core(TM) i5-1345U, and 32 GB of RAM. We considered 20 differentiable objective functions from \cite{Alietal}, as described in Table \ref{table:1}. These functions are particularly challenging to minimize due to the presence of numerous spurious local minima. For each function, we randomly generated 360 starting points. Each pair of a function and a starting point was treated as a separate problem, resulting in a total of 7,200 test problems.
\begin{table}
    \begin{tabular}{|l c | l c|}
    \hline
        \textbf{Function} & \textbf{Dimension ($n$)} & \textbf{Function} & \textbf{Dimension ($n$)} \\
    \hline
        Bohachevsky 1 & 2 & Neumaier 2 & 4 \\
        Bohachevsky 2 & 2 & Neumaier 3 & 10 \\
        Cosine Mixture & 4 & Price’s Transistor Modelling & 9 \\
        Easom Problem & 2 & Rastrigin & 10 \\
        Epistatic Michalewicz & 10 & Schaffer 1 & 2 \\
        Exponential & 10 & Schaffer 2 & 2\\
        Griewank & 2 & Shekel’s Foxholes & 10 \\
        Levy and Montalvo 1  & 3 & Shubert & 2 \\
        Levy and Montalvo 2 & 10 & Sinusoidal & 10 \\
        Modified Langerman & 10 & Storn’s Tchebychev & 9 \\
    \hline
    \end{tabular}
     \captionsetup{format=plain}
     \caption{Functions from \cite{Alietal} used in the numerical experiments.}
    \label{table:1}
\end{table}
\normalsize
The methods were compared using data profiles \cite{MoreWild}, with a budget of 100 simplex gradients\footnote{For a function with $n$ variables, one simplex gradient corresponds to $n+1$ function evaluations.} per problem and a tolerance of $\tau=10^{-7}$. At a high level, a data profile for a method is a graph that shows the percentage of problems approximately solved with tolerance $\tau$ as a function of the number of function evaluations (measured in terms of simplex gradients). Therefore, a higher curve indicates a better-performing method.
\newpage
In our first experiment, we compared the methods M, NM1, NM2, and NM3. The data profiles in Figure \ref{fig:1} indicate that M and NM2 exhibit very similar performance, while NM1 and NM3 perform significantly better. Among them, the Metropolis-based method NM3 outperforms NM1. This result highlights the advantages of allowing strong non-monotonicity when addressing global optimization problems.

\begin{figure}[h!]
\centering
\includegraphics[width=0.6\columnwidth]{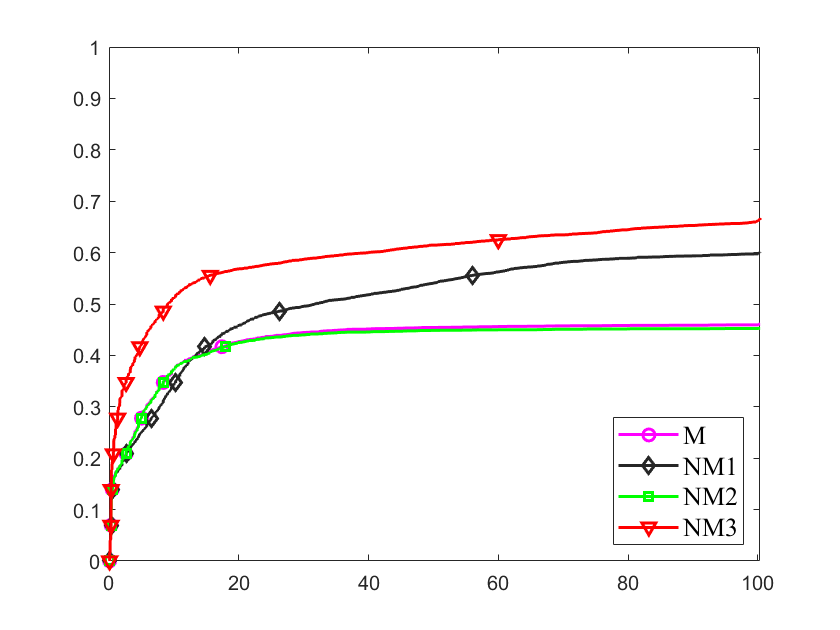}
\caption{Percentage of problems solved as a function of the number of simplex gradients evaluated.}
\label{fig:1}
\end{figure}

In our second experiment, we compared NM3 with NM4, our newly modified Metropolis-based method. Within the given budget of function evaluations, NM4 demonstrated superior performance, as shown in Figure \ref{fig:2}. Specifically, NM4 solved approximately 75\% of the problems, whereas NM3 solved fewer than 60\%.

\begin{figure}[h!]
\centering
\includegraphics[width=0.6\columnwidth]{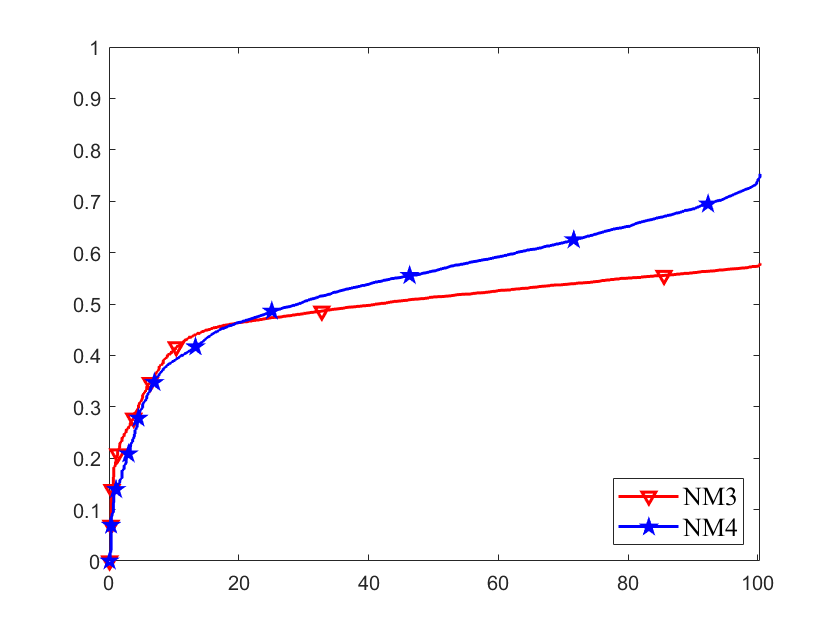}
\caption{Percentage of problems solved as a function of the number of simplex gradients evaluated.}
\label{fig:2}
\end{figure}

\section{Conclusion}\label{Conclusions}

In this paper, we propose a non-monotone line-search method for minimizing functions with numerous spurious local minima. The new method is a relaxed variant of the Metropolis-based non-monotone approach introduced in \cite{GrapigliaSachs}, derived using Dimensional Analysis. The level of non-monotonicity is controlled by a positive parameter $\theta$: the smaller the value of $\theta$, the greater the likelihood of accepting an iterate with a higher function value, potentially allowing the method to explore more promising regions of the domain. Assuming that the objective function is bounded from below and has a Lipschitz continuous gradient, we prove that the proposed method requires at most \(\mathcal{O}(\epsilon^{-2})\) iterations to find an \(\epsilon\)-approximate stationary point when \(\theta > 1\), and $\mathcal{O}\left(\epsilon^{-2/\theta}\right)$ iterations when \(\theta \in (0,1)\). The latter complexity bound highlights the trade-off between exploration and exploitation: choosing \(\theta \ll 1\) encourages greater exploration but may significantly increase the number of iterations required to reach an \(\epsilon\)-approximate stationary point. We also reported numerical experiment comparing monotone and non-monotone line-search methods. The results clearly demonstrate the potential benefits of using non-monotone line search methods for functions with numerous non-global local minimizers. Moreover, the proposed method showed promising performance, surpassing the Metropolis-based approach from \cite{GrapigliaSachs}.


\bibliographystyle{siamplain}

\end{document}